\documentclass[11pt,fleqn]{article}
\usepackage[utf8]{inputenc}
\usepackage{amsmath,amsfonts,amsthm,amssymb}

\newcommand{\R}{\mathbb R}
\newcommand{\C}{\mathbb C}
\newcommand{\Z}{\mathbb Z}
\newcommand{\D}{\mathbb D}
\newcommand{\diag}{\mathop {\rm diag}\nolimits}
\newcommand{\tr}{\mathop {\rm tr}\nolimits}

\newtheorem{theorem}{Theorem}
\newtheorem{lemma}[theorem]{Lemma}

\evensidemargin 0cm
\oddsidemargin 0cm
\topmargin -1cm
\textheight 23 true cm
\textwidth 16 true cm

\begin{document}
\date {\today}
\title{A New Proof that the Numerical Range is a Complete 2-Spectral Set
for Weighted Shift Matrices}

\author{Michel Crouzeix\footnote{Univ.\,Rennes, CNRS, IRMAR\,-\,UMR\,6625, F-35000 Rennes, France.
email: michel.crouzeix@univ-rennes.fr},  Anne Greenbaum\footnote{University of Washington,
Applied Math Dept., Box 353925, Seattle, WA 98195.  email:  greenbau@uw.edu},
}

\maketitle
\begin{abstract}
In this paper we give an alternative proof that the family of matrices studied by Daeshik Choi 
in {\em A proof of Crouzeix's conjecture for a class of matrices},
{ Linear Algebra and its Applications}, {\bf 438}, no.~8 (2013), pp.~3247--3257,
satisfy Crouzeix's conjecture. We also show that they satisfy the completely bounded version
of the conjecture.
\end{abstract}

\section{Introduction}
In this paper we consider the following class of $d\times d$ matrices
\[
M(\alpha_1,\dots,\alpha_d)=P_d\,\diag(\alpha_1,\dots,\alpha_d),\quad \alpha_1,\dots,\alpha_d\in\C.
\]
where $P_d$ is the cyclic permutation $d\times d$ matrix defined by
$p_{d1}=1$, $p_{ij}=1$ if $j{-}i=1$, $p_{ij}=0$ otherwise. 
The matrices of this family are known to have numerical ranges invariant by the rotation centered at $0$ of angle $2\pi /d$ \cite{iss,tsaiwu}
and have been considered by Daeshik Choi \cite{choi} who has shown that they satisfy the conjecture \cite{crzx} $\psi (M)\leq 2$, where
\begin{align*}
\psi (M)=\sup\{\Vert p(M)\Vert \,;\hbox{ $p$ a polynomial, }|p(z)|\leq1\hbox{ in }W(M)\},
\end{align*}
and $W(M)$ is the numerical range of $M$.
The aim of this paper is to give an alternative proof of this result obtained by using a technique of extremal pairs (i.e., left and right singular vectors corresponding to the largest singular value of a matrix) as initiated in \cite{cgl}, see also \cite{sv}. We also show that, for this familly, $\psi _{cb}(M)=\psi (M)\leq 2$
where $\psi _{cb}$ is the complete bound defined by
\begin{align*}
\psi_{cb}(M)=\sup\{\Vert P(M)\Vert\, ; \hbox{ $P$ a polynomial from $\C$ into $\C^{m,m}$, }\|P(z)\|\leq1\hbox{ in }W(M), m\geq1\}.
\end{align*}
A classical way to study these quantities is to use the Riemann transform $\varphi $ from the interior of $W(M)$ onto 
the open unit disk $\D$ satisfying $\varphi (0)=0$ and $\varphi '(0)>0$, and to remark that 
$\psi (M)=\psi _\D(\varphi (M))$ and $\psi _{cb}(M)=\psi _{cb,\D}(\varphi (M))$, where we have defined
\begin{align*}
\psi_{_\D}(A)&=\sup\{\Vert p(A)\Vert\, ; \hbox{ $p$ a polynomial, }|p(z)|\leq1\hbox{ in }\D\},\\
\psi_{cb,\D}(A)&=\sup\{\Vert P(A)\Vert\, ; \hbox{ $P$ a polynomial from $\C$ into $\C^{m,m}$, }\|P(z)\|\leq1\hbox{ in }\D, m\geq1\}.
\end{align*}

\medskip

The organization of the paper is the following. In Section~2, we show that, for this family, we have 
$\psi _\D(M)=\psi _{cb,\D}(M)$ and that $\psi _\D(M)$ is realized by $p(z)=z^k$, with $1\leq k\leq d{-}1$ when $\psi_\D(M)$ is finite. In Section~3, we  give our new proof of $\psi (M)\leq 2$.\medskip

 \section{Estimate in the unit disk.} \label{M}
 
 In this section, in order to lighten the notation and if there is no ambiguity, we will often use $M$ in place of $M(\alpha_1,\dots,\alpha_d)$. Here we will use an important result due to Vern Paulsen \cite{paul} which says that
\begin{align}
\label{pcbd}
\psi_{cb,\D}(A)&=\min\{\Vert X^{-1}\Vert\,\|X\|\, ;\,X\in\C^{d,d},\  \|X^{-1}AX\|\leq1\},
\end{align}
(with the convention $=+\infty$ if no such $X$ exists).

 The following lemma shows that, for the class of matrices $M$, the complete bound, the polynomial bound, and the power bound, are equal.
  
\begin{lemma}\label{lem1} If $|\alpha_1\cdots\alpha_d|>1$, then $\psi_{\D}(M)=\psi_{cb,\D}(M)=+\infty$.
Otherwise, if $|\alpha_1\cdots\alpha_d|\leq1$, then 
\begin{align*}
\psi_{\D}(M)=\psi_{cb,\D}(M)=\max_{0\leq k}\|M^k\|=\max_{0\leq k\leq d{-}1}\|M^k\|.
\end{align*}

\end{lemma}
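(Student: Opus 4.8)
The plan is to reduce everything to the single structural identity $M^d=\gamma I$ with $\gamma=\alpha_1\cdots\alpha_d$, which follows at once from $Me_j=\alpha_j e_{j-1}$ (indices read cyclically, $e_0:=e_d$). Each power $M^k$ is then a generalized permutation matrix, $M^k e_j=(\alpha_j\alpha_{j-1}\cdots\alpha_{j-k+1})\,e_{j-k}$, so that $\|M^k\|=\max_{1\le j\le d}|\alpha_j\alpha_{j-1}\cdots\alpha_{j-k+1}|$ and $\|M^{k+d}\|=|\gamma|\,\|M^k\|$. First I would dispose of the case $|\gamma|>1$: then $\|M^{nd}\|=|\gamma|^n\to\infty$, and since $z^{nd}$ is bounded by $1$ on $\D$ this forces $\psi_{\D}(M)=+\infty$, hence also $\psi_{cb,\D}(M)=+\infty$. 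When $|\gamma|\le1$ the relation $\|M^{k+d}\|=|\gamma|\,\|M^k\|\le\|M^k\|$ shows the powers never grow past the first period, so $\max_{k\ge0}\|M^k\|=\max_{0\le k\le d-1}\|M^k\|=:N$. The two easy inequalities $N\le\psi_{\D}(M)$ (test with $p(z)=z^k$) and $\psi_{\D}(M)\le\psi_{cb,\D}(M)$ then reduce the whole lemma to the single bound $\psi_{cb,\D}(M)\le N$.

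For this upper bound I would invoke Paulsen's formula \eqref{pcbd} and exhibit an explicit \emph{positive diagonal} similarity. Set
\[
m_j=\max_{0\le\ell\le d-1}\bigl|\alpha_j\alpha_{j-1}\cdots\alpha_{j-\ell+1}\bigr|\qquad(j=1,\dots,d),
\]
where the term $\ell=0$ is the empty product $1$, and put $X=\diag(m_1^{-1},\dots,m_d^{-1})$. The matrix $X^{-1}MX$ is again a weighted cyclic shift, now with weights $\tfrac{m_{j-1}}{m_j}\alpha_j$, so $\|X^{-1}MX\|=\max_j\tfrac{m_{j-1}}{m_j}|\alpha_j|$. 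Moreover $\|X\|\,\|X^{-1}\|=\max_j m_j/\min_j m_j$, and by construction $\max_j m_j=\max_{0\le\ell\le d-1}\|M^\ell\|=N$.

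The heart of the argument is then the pointwise inequality $m_j\ge|\alpha_j|\,m_{j-1}$, and this is where the hypothesis $|\gamma|\le1$ enters: expanding $|\alpha_j|\,m_{j-1}$ gives the maximum of $|\alpha_j\alpha_{j-1}\cdots\alpha_{j-\ell}|$ over $0\le\ell\le d-1$, i.e. over blocks ending at $j$ of length $1$ up to $d$; every block of length $\le d-1$ already occurs in $m_j$, while the single block of length $d$ equals $|\gamma|\le1\le m_j$. Hence $\|X^{-1}MX\|\le1$. The same inequality together with the trivial $m_j\ge1$ gives $\min_j m_j\ge1$, so $\|X\|\,\|X^{-1}\|=\max_j m_j/\min_j m_j\le N$, and Paulsen's formula yields $\psi_{cb,\D}(M)\le N$. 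Combined with the easy inequalities this gives $N\le\psi_{\D}(M)\le\psi_{cb,\D}(M)\le N$, hence equality throughout. I expect the main obstacle to be locating this similarity: the naive attempt to balance all weights to a common modulus $|\gamma|^{1/d}$ is wasteful, whereas the ``running maximum'' weights $m_j$ are exactly what make the condition number collapse to $N$. Vanishing weights cause no trouble, since then the corresponding constraint is vacuous and any block through a zero contributes $0$ to every $\|M^k\|$.
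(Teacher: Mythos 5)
Your proof is correct and follows essentially the same route as the paper: both establish the trivial inequalities $\max_k\|M^k\|\le\psi_{\D}(M)\le\psi_{cb,\D}(M)$ and then close the loop via Paulsen's formula \eqref{pcbd}, using a diagonal similarity whose entries are running maxima of partial products of the weights, with the hypothesis $|\alpha_1\cdots\alpha_d|\le1$ entering exactly to handle the length-$d$ block and the inclusion of the empty product forcing the condition number down to $\max_{0\le k\le d-1}\|M^k\|$. The only differences are cosmetic: you use backward products and $X=\diag(m_1^{-1},\dots,m_d^{-1})$ where the paper uses forward products $\varpi_k(\alpha_j)$ and $X=\diag(x_1,\dots,x_d)$, and you carry moduli throughout instead of first reducing to nonnegative weights by unitary similarity.
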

\begin{proof} First we remark that there exists $\theta \in\R$ such that the matrix $M(\alpha_1,\dots,\alpha_d)$ is unitarily similar to the matrix $e^{i\theta }M([\alpha_1|,\dots,|\alpha_d|)$, thus we have clearly $\psi_\D (M(\alpha_1,\dots,\alpha_d))=\psi_\D (M([\alpha_1|,\dots,|\alpha_d|))$ and $\psi_{cb,\D} (M(\alpha_1,\dots,\alpha_d))=\psi_{cb,\D} (M([\alpha_1|,\dots,|\alpha_d|))$. Therefore we can assume that $\alpha_j\geq 0$ for $j=1,\dots, d$.

The case $\alpha_1\cdots\alpha_d>1$ directly follows from the relation
$M^d= \alpha_1\cdots\alpha_d\,I$. From now on, we assume  
$\alpha_1\cdots\alpha_d\leq1$. We clearly have
\begin{align*}
\psi_{cb,\D}(M)\geq\psi_{\D}(M)\geq \max_{0\leq k}\|M^k\|=\max_{0\leq k\leq d{-}1}\|M^k\|.
\end{align*}
To obtain converse inequalities, we define for $j\in\Z/d\Z$, $\varpi_1(\alpha_j)=\alpha_j$, 
$\varpi_k(\alpha_j)=\alpha_j\cdots \alpha_{j+k-1}$, $x_j=\max(1,\varpi_1(\alpha_j),\dots,\varpi_{d-1}(\alpha_j))$, and $X=\diag(x_1,\dots,x_d)$. Note that for all $j$, $\varpi_d ( \alpha_j ) =\alpha_1\cdots\alpha_d\leq 1$.
whence 
\[
\alpha_j\,x_{j+1}=\max(\varpi_1(\alpha_j),\varpi_2(\alpha_j),\dots,\varpi_d(\alpha_j))\leq x_j.
\]
We have $X^{-1}M(\alpha_1,\dots,\alpha_d)X=M(\beta _1,\dots,\beta _d)$ with $\beta_j=x_{j-1}^{-1}\alpha_jx_{j}$; hence $\beta _j\leq 1$ and thus $\|X^{-1}M(\alpha_1,\dots,\alpha_d)X
\|\leq 1$. which shows that 
\[
\Psi _{cb,\D}(M)\leq \|X^{-1}\|\,\|X\|\leq \|X\|\leq \max_{j,k}\varpi_k(\alpha_j)\leq \max_{k}\|M^k\|\leq 
\Psi _{\D}(M)\leq \Psi _{cb,\D}(M).
\]
\end{proof} 
The previous proof is strongly inspired by the one of Daeshik Choi  \cite[Theorem\,4]{choi}.

\section{The new proof}
We remark that $M^d=\alpha_1\cdots\alpha_d\, I$ and therefore, if $\alpha_1\cdots\alpha_d\neq 0$, the eigenvalues of $M$ are distinct and are $\lambda_j= \lambda_1e^{2i(j-1)\pi /d}$ with $\lambda_1=(\alpha_1\cdots\alpha_d)^{1/d}$.  From the uniqueness of the Riemann mapping $\varphi $ such that $\varphi(0)=0 $, $\varphi '(0)>0$ and
from the invariance of $W(M)$ by rotation of angle $\pi /d$, we deduce that $c:=\varphi (\lambda _1)/\lambda _1=\varphi (\lambda _j)/\lambda _j$ for all $j$, whence $\varphi (M(\alpha_1,\dots,\alpha_d))=c\,M(\alpha_1,\dots,\alpha_d)=M(c\alpha_1,\dots,c\alpha_d)$. If $\alpha_1\cdots\alpha_d=0$, it is known that $W(A)$ is a disk centered in $0$ (c.f. \cite[Lemma\,2]{choi} or \cite{crzx2}) and thus $\varphi (z)=c\,z$ for some $c>0$.
Therefore, in any case, we have $\varphi (M(\alpha_1,\dots,\alpha_d))=c\,M(\alpha_1,\dots,\alpha_d)=M(c\alpha_1,\dots,c\alpha_d)$ and we deduce from Lemma\,\ref{lem1} applied to $\varphi (M)$ that
\[
\textrm{there exists $k\leq d{-}1$ such that \quad}\psi (M)=\psi _{cb}(M)=\|\varphi (M)^k\|.
\]
\begin{theorem}\label{th7}
If $M=P_d\,\diag(\alpha_1,\dots,\alpha_d)$, then we have $\psi(M)=\psi _{cb}(M)\leq 2$. Furthermore,
if $\alpha_1\cdots\alpha_d\neq 0$ then $\psi(M)=\psi _{cb}(M)< 2$.
 \end{theorem}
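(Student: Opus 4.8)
The reduction established just before the statement already gives $\psi(M)=\psi_{cb}(M)=\|\varphi(M)^k\|$ for a single exponent $1\le k\le d{-}1$, so the only remaining content is the numerical bound: I must show $\|\varphi(M)^k\|\le 2$, with strict inequality when $\alpha_1\cdots\alpha_d\neq0$. Writing $\varphi(M)=M(\beta_1,\dots,\beta_d)$ with $\beta_j=c\alpha_j\ge0$ (after the unitary reduction used in Lemma~\ref{lem1}), the plan is to treat the degenerate case $\alpha_1\cdots\alpha_d=0$ and the generic case $\alpha_1\cdots\alpha_d\neq0$ separately, since they are of a genuinely different nature.

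If $\alpha_1\cdots\alpha_d=0$, then $W(M)$ is a disk centred at $0$, so $\varphi$ is linear and $\varphi(M)$ has numerical radius $w(\varphi(M))=1$. Berger's power inequality $w(\varphi(M)^k)\le w(\varphi(M))^k$ combined with the elementary bound $\|T\|\le 2\,w(T)$ then gives $\|\varphi(M)^k\|\le 2\,w(\varphi(M))^k\le 2$ immediately; this case is sharp, as the nilpotent shift already attains $\psi=2$.

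For the main case $\alpha_1\cdots\alpha_d\neq0$ I would use the extremal-pair technique of \cite{cgl}. Because $1\le k\le d{-}1$, the matrix $\varphi(M)^k$ is a weighted shift by $k$, hence a weighted permutation with one nonzero entry per row and column; its largest singular value is $\max_j\beta_j\beta_{j-1}\cdots\beta_{j-k+1}$, attained at some $j_0$, and the extremal right and left singular vectors are simply the basis vectors $v=e_{j_0}$ and $u=e_{j_0-k}$. Writing $\sigma=\|\varphi(M)^k\|=\langle\varphi(M)^kv,u\rangle$ and using $\varphi(M)^k=(\varphi^k)(M)$ with the Cauchy representation, one obtains
\[
\sigma=\frac{1}{2\pi i}\oint_{\partial W(M)}\varphi(s)^k\,\langle(sI-M)^{-1}v,u\rangle\,ds ,
\]
where a direct computation of the resolvent of the weighted shift yields the single matrix element $\langle(sI-M)^{-1}v,u\rangle=\Pi\,s^{d-k-1}/(s^d-\alpha_1\cdots\alpha_d)$ with $\Pi=\alpha_{j_0}\cdots\alpha_{j_0-k+1}$. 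Since $|\varphi|\equiv1$ on $\partial W(M)$, I would then introduce the holomorphic completion $g$ on $W(M)$ having boundary values $\overline{\varphi^k}=1/\varphi^k$; the rotational symmetry of $\varphi$ forces $g(M)=\kappa\,M^{d-k}$ for a scalar $\kappa$, and the Crouzeix--Palencia operator inequality $\|\varphi(M)^k+g(M)^*\|\le 2$ reduces the whole problem to controlling the scalar $\langle g(M)^*v,u\rangle=\overline{\kappa}\,(\alpha_1\cdots\alpha_d)/\Pi$.

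The hard part will be turning this into the strict bound $\sigma<2$. A naive estimate $\sigma\le\frac1{2\pi}\oint_{\partial W(M)}|\langle(sI-M)^{-1}v,u\rangle|\,|ds|$ is hopeless: that total variation is a complete elliptic integral that blows up when $W(M)$ degenerates to a thin region, and the whole point is the cancellation supplied by the phase of $\varphi(s)^k$ on the boundary. I expect the decisive step to be proving that the correction satisfies $\mathrm{Re}\,\langle g(M)^*v,u\rangle\ge0$, for then $\sigma\le\sigma+\mathrm{Re}\,\langle g(M)^*v,u\rangle=\mathrm{Re}\,\langle(\varphi(M)^k+g(M)^*)v,u\rangle\le\|\varphi(M)^k+g(M)^*\|\le 2$, and that this real part is strictly positive exactly when $\alpha_1\cdots\alpha_d\neq0$, i.e.\ when $W(M)$ is not a disk and $\varphi$ is genuinely nonlinear. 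Establishing the sign and the strict positivity of $\mathrm{Re}\,\kappa$ from the symmetry of $\varphi$ — equivalently, extracting quantitative slack from the curvature of $\partial W(M)$ — is where the real work lies.
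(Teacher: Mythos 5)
Your scaffolding for the main case is sound, and it is in fact essentially the paper's own scaffolding in different clothes: your Cauchy completion $g$ is exactly the paper's $g_0(z)=\frac{1}{2\pi i}\int_{\partial W}\overline{f_0(\tau)}\,\frac{d\tau}{\tau-z}$; your inequality $\|\varphi(M)^k+g(M)^*\|\le 2$ is the paper's bound $\|S_0(M)\|\le 2$ from Crouzeix--Palencia; and your scalar $\kappa$ satisfies $\kappa\,\lambda_1^d=\beta(\lambda_1)^k-\beta(0)^k$ with $\beta(z)=z/\varphi(z)$, so the condition $\mathrm{Re}\,\kappa\ge 0$ you isolate is precisely the paper's condition $h_0(\lambda_1)\ge 0$. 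Your variations are legitimate and even pleasant: using the explicit singular vectors $v=e_{j_0}$, $u=e_{j_0-k}$ of the weighted permutation $\varphi(M)^k$ and a linear estimate, in place of the extremal-pair identity $\langle f_0(M)x_0,x_0\rangle=0$ and the resulting quadratic inequality $\psi^2\le 2\psi-h_0(\lambda_1)$; and dispatching the degenerate case $\alpha_1\cdots\alpha_d=0$ by Berger's power inequality plus $\|T\|\le 2w(T)$, whereas the paper treats that case uniformly (there $h_0(\lambda_1)=0$).

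The genuine gap is that you stop exactly where the theorem's analytic content begins: you never prove $\mathrm{Re}\,\kappa\ge 0$ (nor its strict version), you only record that the proof reduces to it and call it ``where the real work lies.'' Everything preceding that point is bookkeeping around the known inequality $\|f(M)+g(M)^*\|\le 2$; the positivity is the theorem. The paper proves it by showing that $\beta(x)=x/\varphi(x)$ is increasing on $W(M)\cap\R^+$, which requires three ingredients none of which appear in your proposal: (i) passing to the inverse Riemann map $\sigma=\varphi^{-1}$ and using rotational invariance to write $\sigma(\xi)/\xi=\gamma(\xi^d)$ with $\gamma$ zero-free and real-symmetric; (ii) the external input \cite[Lemma~4]{crzx2} that $|\sigma(e^{it})|$ is decreasing on $[0,\pi/d]$, so that the boundary values of $u=\log|\gamma|$ increase with $\cos t$; and (iii) a maximum-principle argument giving $\partial u/\partial x>0$ inside $\D$, hence $\gamma$, and so $\beta$, strictly increasing on the reals — which also yields the strict bound $\psi(M)<2$ when $\alpha_1\cdots\alpha_d\neq 0$. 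Your guess that the sign should come from ``curvature of $\partial W(M)$'' points in a different and not obviously workable direction: the actual mechanism is boundary monotonicity of $|\varphi^{-1}|$ (a Schwarz--Jack-type fact), not curvature. A secondary loose end: your parenthetical ``i.e.\ when $W(M)$ is not a disk'' silently identifies $\alpha_1\cdots\alpha_d\neq 0$ with $W(M)$ not being a disk; the implication cited in the paper goes the other way, so this too would need an argument. As it stands, the proposal establishes the reduction but not the bound.
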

 \begin{proof} As in Lemma\,1, we may assume that $\alpha_j>0$ for all $j$; this implies that $\lambda _1>0$, that $W(A)$ is symmetric with respect to the real axis and then that $\varphi (\bar z)=\overline{\varphi (z)}$; in particular $\varphi (x)\in\R^+$ if $x\in\R^+$. We set $ f_0(z)=\varphi (z)^k$ whence $\psi (M)=\psi _{cb}(M)=\|f_0(M)\|$.
There exists a unit vector $x_0$ such that $\|f_0(M)\|=\|f_0(M)x_0\|=\psi(M)$ and it is known that the extremal pair $(f_0,x_0)$ satisfies $\langle f_0(M)\,x_0,x_0\rangle=0$ (cf., for instance, \cite[Proposition 2.2]{hol}).  We now denote by $\partial W$ the boundary of $W(M)$ and 
introduce $g_0(z)=\frac{1}{2\pi  i}\int_{\partial W}\overline{f_0(\tau)}\frac {d\tau} {\tau{-}z}$ and set $\beta(z)=z/\varphi (z)$.  From the invariance by rotation, $\beta $ is a holomorphic function of $z^d$ and we have $\beta (x)>0$ for $x\geq 0$, $x\in W(M)$. We have for $z$ interior to $W(M)$
\begin{align*}
g_0(z)&
=\frac{1}{2\pi  i}\int_{\partial W}\frac{1}{f_0(\tau)}\frac {d\tau } {\tau {-}z}=
\frac{1}{2\pi  i}\int_{\partial W}\frac {\beta(\tau)^k}{\tau^k}\frac{d\tau }{\tau {-}z}=\frac{1}{2\pi  i}\int_{\partial W}\frac {\beta(\tau)^k-\beta (0)^k}{\tau ^k}\frac{d\tau}{\tau {-}z}\\&=\frac {\beta(z)^k-\beta (0)^k}{z^k},
\end{align*}
since the function $\frac {\beta(z)^k-\beta (0)^k}{z^k}$ is holomorphic in $z$. The function $h_0(z):=f_0(z)g_0(z)=\frac {\beta(z)^k- \beta (0)^k}{\beta (z)^k}$ is holomorphic in $W(M)$ and only depends on $z^d$. In particular, for the eigenvalues, we have $h_0(\lambda _1)=h_0(\lambda _2)\cdots =h_0(\lambda _d)$, whence $h_0(M)=h_0(\lambda _1)\,I$.
We now introduce
\begin{align*}
S_0(M)=\frac{1}{2\pi \, i}\int_{\partial W}f_0(\tau) ((\tau {-}M)^{-1}d\tau-(\bar\tau {-}M^*)^{-1}d\bar\tau).
\end{align*}
It is well known that $\|S_0(M)\|\leq 2$ (see, for instance, \cite{CP2017}), and we have $f_0(M)=S_0(M)-g_0(M)^*$, then
\begin{align*}
\psi(M)^2&=\langle f_0(M)x_0,f_0(M)x_0\rangle=\langle f_0(M)\,x_0,S_0(M)x_0\rangle-\langle f_0(M)\,x_0,g_0(M)^*x_0\rangle\\
&=\langle f_0(M)\,x_0,S_0(M)x_0\rangle-\langle h_0(M)\,x_0,x_0\rangle\\
&\leq 2\,\psi (M)-h_0(\lambda _1),
\end{align*}
which shows that $\psi (M)\leq 1{+}\sqrt{1{-}h_0(\lambda _1)}$. \smallskip

In order to prove the theorem, it suffices to show that $h_0(\lambda _1) \geq 0$.  Thus it suffices to show that (in $W(M))$ $\beta (x)$ is an increasing function for $x>0$.
We now consider the inverse function $\sigma$ of $\varphi $.
We have $\beta (x)=\frac{x}{\varphi (x)}=\frac{\sigma (\xi )}{\xi }$ with $\xi =\varphi (x)$. Then, for showing that $\beta (x)$ is increasing with $x>0$, it suffices to prove that $\frac{\sigma (x )}{x}$ is increasing on $x\in(0,1]$.

From the invariance by rotation of angle $\frac{2\pi }d$ it is clear that $\frac{\sigma (\xi )}{\xi }$
 is a holomorphic function of $\xi ^d$ in $\D$, and we denote this function by $\gamma $ :  $\gamma (\xi ^d)=\frac{\sigma (\xi )}{\xi }$.  
From the symmetry of $W(A)$ with respect to the real axis, we deduce that $\gamma (\bar\xi )=\overline{\gamma (\xi )}$, whence $\gamma (x)\in\R$ if $x\in [-1,1]$.

It has been proven in \cite[Lemma\,4]{crzx2}, that $|\sigma (e^{it})|$ is a decreasing $C^1$ function of $t$ on the interval $[0,\frac\pi d]$, hence $|\gamma (e^{it})|=|\sigma (e^{it/d})|$ is a decreasing $C^1$ function of $t$ on the interval $[0,\pi]$ and from $\gamma(\bar z )=\overline{\gamma(z)}$, the function $|\gamma (e^{it})|$ is increasing on the interval $(-\pi ,0)$. Now, since $\gamma $ is nonvanishing in $\D$,
we can write $\gamma (x{+}i\,y)=\exp(u(x{+}i\,y){+}i\,v(x{+}i\,y))$ with $u$ and $v$ real valued harmonic functions on $\D$; then 
 $u(\cos t,\sin t)=\log |\gamma (t)|$ is an increasing function of $\cos t\in [-1,1]$. 
 Since the boundary $\partial W(A)$ is at least $C^1$, the functions $\sigma $, $\gamma $ and $u$ are
$C^1$ in $\D$. Thus we have  $\frac{\partial u}{\partial x}\geq 0$ on the boundary of $\D$. From the maximum principle, we then deduce $\frac{\partial u}{\partial x}> 0$ in $\D$ and in particular on the real axis.  This shows that $\gamma (x)=\exp(u(x,0))$ is increasing, whence $\sigma (x)/x=\gamma (x^d)$ is increasing on $(0,1)$.

\end{proof}
{\noindent}
{\bf Remark\,1.} Note that for the class of matrices in Theorem \ref{th7}, we have proved
not only that $\psi (M) \leq 2$ but also the stronger result that
$\| f_0 (M) \| \leq \| S_0 (M) \|$.  This has been observed
numerically for a wide variety of matrices and is reported in \cite{cgl}.\medskip

{\noindent}
{\bf Remark\,2.} 
 If we consider the matrix $M(2\sin\varphi ,2\cos\varphi,0)$, $\varphi \in[0,\pi /2]$; it is easy to prove that $W(M)$ is the closed unit disk, thus from Lemma 1, $\psi (M)=\psi _\D(M)=2\,\max(\sin\varphi ,\cos\varphi ,\sin(2\varphi ))$, which is $<2$ unless $\varphi =0$, $\varphi =\pi /4$ or $\varphi =\pi /2$. This gives a simple example of a matrix with a disk as numerical range and $\psi (D)\neq 2$.

\section{Some comments}

There exist some situations where the proof of $\psi (A)\leq 2$ is easy to obtain. This is the case when we may exhibit a matrix $X$ with condition number $\|X\|\,\|X^{-1}\|\leq 2$ and $\psi (X^{-1}AX)=1$, thus in particular when $X^{-1}AX$ is a diagonal matrix and cond$(X)\leq 2$; this is also known when $W(A)$ is a disk. The Choi family of matrices studied in this paper is another of the few classes of matrices
where a relatively simple  proof of $\psi (A)\leq 2$ has been given. This has been possible thanks to some different facts:\medskip

a) Symmetry properties of $W(M)$. This implies here a natural choice for a Riemann mapping $\varphi $ from $W(M)$ onto the closed unit disk and symmetry for the eigenvalues of $A$. This leads to a very simple formula : $\varphi (M)=c\,M$.\medskip

b) The fact that $\psi (M)$ is realized by the function $\varphi^k$ for some integer $k$ (and not by
a more general Blaschke product of $\varphi $). This implies that (with the notation of Theorem\,2)  $h_0(M)=h_0(\lambda _1)I$ and then $\langle h_0(M)x_0,x_0\rangle =h_0(\lambda _1)$ since $x_0$ is a unit vector.\medskip

c) The fact that, for the inverse function $\sigma $ of $\varphi $, the modulus $|\sigma(e^{it})|$ is decreasing on $(0,\frac\pi d)$, which allows us to prove that $\beta (x)$ is increasing with $x>0$.\medskip

Our result contains in particular a new proof of $\psi (A)\leq 2$ for $2\times2$ matrices.  Indeed,
if $W(A)$ is not a disk, there exist $a_1,a_2 \in\C$ and $U$ unitary such that
$A{-}\frac12\tr(A) I=U^*M(a_1,a_2)U$. In this $2\times2$ case, our proof is very similar to the one in \cite{mmlro}. Note that the previous fact c) replaces their assumption of bi-circularly symmetric domains. 
\medskip

The relation $\varphi (M)=cM$ occurs when $M$ is diagonalisable and $\varphi (\lambda _i)=c\lambda _i$ for all the eigenvalues $\lambda _i$ of $M$. This property was essential 
in the article {\em Crouzeix's conjecture holds for tridiagonal $3 \times 3$ matrices
with elliptic numerical range centered at an eigenvalue} \cite{glakuli}, but in that case, properties b) and c) are not satisfied. For some of these matrices, it was possible to  exhibit a matrix $X$ with condition number $\|X\|\,\|X^{-1}\|\leq 2$ and $\psi (X^{-1}MX)=1$, but for the others more involved algebraic calculations were necessary including the use of Maxima and Mathematica. 
\medskip

The properties a) and c) are satisfied by the $d\times d$ matrices with invariant by rotation numerical ranges (except maybe in the case where $W(M)$ is a disk) but, in general, this is not the case for
property b). However, we think that there exist some subfamilies, different from the Choi family, which
satisfy a), b), and c);  for them the proof of Theorem\,2 works and we have $\psi (M)\leq 2$.
In particular, from our numerical simulations, it seems that this is the case for the following family parametrized by $a\geq 0$
\begin{align*}
A=\begin{pmatrix}
1&a&0 &c\\0 &i &ic &0\\0 &0 &-1 &-a\\0 &0 &0 &-i
\end{pmatrix},\quad \textrm{with \quad}c=\sqrt{2a^2/(2+a^2)},
\end{align*}
but we have not yet obtained a mathematical proof of property b).

\end{document}